\theoremstyle{plain}
\newtheorem{thm}{Theorem}
\newtheorem{lem}{Lemma}
\theoremstyle{definition}
\newtheorem*{exa*}{Example}
\crefname{thm}{theorem}{theorems}
\crefname{lem}{lemma}{lemmas}
\newcommand{\bs}{\boldsymbol}
\newcommand{\mc}{\mathcal}
\def \a{\alpha} \def \b{\beta}  \def \e{\varepsilon}    \def \s{\sigma} \def \t{\theta} 
\numberwithin{equation}{section}
\renewcommand{\labelenumi}{\setlength{\labelwidth}{\leftmargin}
   \addtolength{\labelwidth}{-\labelsep}
   \hbox to \labelwidth{\theenumi.\hfill}}
\begin{document}
\title{Small fractional parts of polynomials}
\author{Roger Baker}

 \begin{abstract}
Let $k \ge 6$. Using the recent result of Bourgain, Demeter, and Guth \cite{bdg} on the Vinogradov mean value, we obtain new bounds for small fracitonal parts of polynomials $\a_kn^k + \cdots + \a_1n$ and additive forms $\b_1n_1^k + \cdots + \b_sn_s^k$. Our results improve earlier theorems of Danicic (1957), Cook (1972), Baker (1982, 2000), Vaughan and Wooley (2000), and Wooley (2013).
 \end{abstract}
 

\subjclass[2010]{Primary 11J54}
\maketitle

\section{Introduction}\label{sec1}

Let $J_{s,k}(N)$ be the Vinogradov mean value,
 \[J_{s,k}(N) : = \int_{[0,1)^k}\Bigg|\sum_{n=1}^N
 e(x_kn^k + \cdots + x_1n)\Bigg|^{2s} dx_1 \ldots dx_k.\]
Here $s$ and $k$ are natural numbers. Recently Wooley \cite{wool3} (for $k=3$) and Bourgain, Demeter, and Guth \cite{bdg} (for $k \ge 4)$ have established the main conjecture for $J_{s,k}(N)$, namely
 \begin{equation}\label{eq1.1}
J_{s,k}(N) \ll_{k,\e} N^{s+\e} + N^{2s-k(k+1)/2 + \e}. 
 \end{equation}
Here $\e$ is an arbitrary positive number. In the present note we combine \eqref{eq1.1} with techniques from two earlier publications \cite{rcb3, rcb4} to obtain new bounds of the form
 \[\min_{1\, \le\, n\, \le\, N} \|\a_kn^k + \cdots + 
 \a_1n\| \ll_{k,\e} N^{-\mu_k + \e}\quad (k = 8, 9, \ldots)\tag{i}\]
(with arbitrary real numbers $\a_1, \ldots, \a_k$, $\b_1, \ldots, \b_s$ here and below);
 \begin{gather*}
\min_{1\, \le\, n\, \le\, N} \|\a_kn^k + \a_1n\| \ll_{k,\e} N^{-\rho_k+\e} \quad (k = 6, 7, \ldots)\tag{ii}\\[2mm]
\min_{\substack{0 \le n_1, \ldots, n_s \le N\\
(n_1, \ldots, n_s)\ne \bs{0}}} \|\b_1n_1^k + \cdots + \b_s n_s^k\| \ll N^{-\s_{s,k}+\e} \quad (k=6, 7, \ldots, s \ge 1).\tag{iii}
 \end{gather*}
 
 \begin{thm}\label{thm1}
Let $k \ge 8$. Then (i) holds with $\mu_k = 1/2k(k-1)$.
 \end{thm}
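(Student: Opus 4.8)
The plan is to use the standard Weyl/Vinogradov machinery for small fractional parts, combined with the sharp mean value estimate \eqref{eq1.1}. Suppose for contradiction that $\|\a_kn^k + \cdots + \a_1n\| > N^{-\mu_k+\e}$ for all $n$ in the range $1 \le n \le N$, with $\mu_k = 1/2k(k-1)$ to be justified. Write $\b(n) = \a_kn^k + \cdots + \a_1n$. The first step is the usual one: if all the fractional parts $\|\b(n)\|$ are large, then a suitable exponential sum $T(\t) = \sum_{n \le N} e(\t\,\b(n))$ is large for all $\t$ in some arithmetic progression of length $\asymp N^{\mu_k - \e}$ (one detects the event $\|\b(n)\|$ small by a Fejér-kernel type weight and uses the pigeonhole/amplification trick; this is exactly the device used in \cite{rcb3, rcb4}). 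Summing $|T(\t)|^{2s}$ over this progression, with the optimal choice $s = k(k-1)/2$, and comparing with what the Vinogradov mean value \eqref{eq1.1} gives after a change of variables that replaces the progression in $\t$ by the box $[0,1)^k$ in the coefficients, produces an upper bound that is too small unless $\mu_k \le 1/2k(k-1)$.

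In more detail, I would carry out the following steps in order. (1) Reduce to showing that, for the contradiction hypothesis, the number of solutions $n_1, \ldots, n_{2s} \le N$ of the system of inequalities $\|\a_j(n_1^j + \cdots + n_s^j - n_{s+1}^j - \cdots - n_{2s}^j)\|$ small (for $j = 1, \ldots, k$) is large. (2) By a transference/completion argument (Lemma-type results in \cite{rcb4}), bound this count from below by $N^{2s - k(k+1)/2 + \delta}$ for an appropriate $\delta$ depending on $\mu_k$. (3) On the other hand, the same count is bounded above by a dilate of $J_{s,k}(N)$; here is where the change of variables $x_j \mapsto$ (leading behavior of $\t \a_j$) enters, and where one must be careful that the relevant region in $x$-space is genuinely the full unit cube modulo $1$, so that \eqref{eq1.1} applies with the term $N^{2s - k(k+1)/2+\e}$ dominating (this requires $2s \ge k(k+1)$, i.e. $s \ge k(k+1)/2$; with $s = k(k-1)/2$ this is not automatic, so I expect the actual argument uses the diagonal term $N^{s+\e}$ in \eqref{eq1.1} for part of the range and splits accordingly, or else takes $s$ slightly larger and optimizes). (4) Compare the lower bound from (2) with the upper bound from (3) and read off the largest admissible $\mu_k$; the arithmetic yields $\mu_k = 1/2k(k-1)$.

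The main obstacle, as suggested above, is step (3): getting the exponential sum over the arithmetic progression in $\t$ to genuinely match the Vinogradov integral over the unit cube, with honest control of the error terms coming from the lower-order coefficients $\a_{k-1}, \ldots, \a_1$ and from the fact that $\t$ ranges over a progression rather than a full interval. One has to pass from a single real parameter $\t$ to the $k$-dimensional vector $(\t\a_1, \ldots, \t\a_k) \bmod 1$ and argue (via a Vinogradov-style iteration or directly via \eqref{eq1.1} applied to a minor modification) that this vector is sufficiently equidistributed; controlling the interaction between the single dilation parameter and the $k$ coordinates is the crux. The bookkeeping here is essentially what was done in \cite{rcb3, rcb4} for smaller $k$ with weaker mean-value input, so the new content is simply feeding in \eqref{eq1.1} and re-optimizing the exponent, which produces the clean value $\mu_k = 1/2k(k-1)$.
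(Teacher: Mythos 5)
The broad opening move — assume for contradiction that all $\|\a_kn^k+\cdots+\a_1n\|$ exceed $N^{-\mu_k+\e}$, deduce via a Fej\'er-kernel argument that $\sum_{m\le M}|g_k(m\bs\a;N)|$ is large with $M\asymp N^{\mu_k-\e}$, and pigeonhole to a single large $|g_k(m\bs\a;N)|$ — matches the paper, which cites \cite[Theorem 2.2]{rcb3} for exactly this. After that your plan diverges, and the divergence is where the gap lies. You propose to sum $|T(\t)|^{2s}$ over the progression of values of $\t=m$ and compare it directly with the Vinogradov integral $J_{s,k}(N)$ via some change of variables. You yourself flag the obstruction: the vector $(\t\a_1,\ldots,\t\a_k)\bmod 1$ sweeps out only a one-dimensional curve inside $[0,1)^k$, so an appeal to $J_{s,k}(N)$ (which averages over the full unit cube) is not available without additional equidistribution input. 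You then say "the bookkeeping here is essentially what was done in \cite{rcb3, rcb4}," but that is not the case: \cite{rcb3, rcb4} do \emph{not} perform a direct mean-value comparison over the progression.

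What the paper actually does instead is route through a Weyl-inequality (\Cref{thm4}): from a single large value $|g_k(m\bs\a;N)|\ge A$ one extracts rational approximations $q\le N^\e(NA^{-1})^k$ with $\|qm\a_j\|\ll N^{-j+\e}(NA^{-1})^k$ for every $j$. The Bourgain--Demeter--Guth estimate enters only inside the proof of \Cref{thm4}, via $J_{s,k-1}(N)\ll N^{s+\e}$ with $s=k(k-1)/2$ after one Weyl differencing step (so the relevant Vinogradov integral is in degree $k-1$, not $k$ as in your plan). Once the rational approximations are in hand, one sets $n=qm$, checks $n\ll M^{k+1}\ll N$, and estimates $\|\a_j n^j\|\le n^{j-1}\|n\a_j\|\ll M^{(k+1)j-1}N^{-j+\e}\ll M^{-1}N^{-\e}$; summing over $j$ produces a small fractional part, the contradiction. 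This construction of an explicit witness $n=qm$ is precisely what sidesteps the one-dimensional-versus-$k$-dimensional issue you identify, and it is also where the exponent $1/2k(k-1)$ actually emerges (as the threshold in hypothesis \eqref{eq2.1} of \Cref{thm4}, together with the requirement $M^{k+1}\ll N$). Your proposal never supplies either the Weyl-inequality step or the final construction, so as written it does not close, and the claimed exponent is not derived.
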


 \begin{thm}\label{thm2} 
 \begin{enumerate}
\item[(a)] Let $k \ge 6$. Then (ii) holds with $\rho_k = 1/k(k-1)$.

\item[(b)] Let $k \ge 6$. For a certain positive absolute constant $B$, (ii) holds with $\rho_k = 1/k(2\log k + B\log\log k)$.
 \end{enumerate}
 \end{thm}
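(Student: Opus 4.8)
The plan is to treat both parts by a single scheme, combining a Fourier reduction to Weyl sums with the minor--major arc dichotomy furnished by \eqref{eq1.1}; (a) and (b) differ only in which Weyl-type estimate is fed in. Write $\alpha=\alpha_k$, $\beta=\alpha_1$ and argue by contradiction: suppose $\delta:=\min_{1\le n\le N}\|\alpha n^k+\beta n\|$ exceeds the asserted bound, so that, after halving, $\|\alpha n^k+\beta n\|>\delta$ strictly for $1\le n\le N$, with $H:=\lceil\delta^{-1}\rceil$ at most a small power of $N$ (indeed the hypothesis forces $\delta<1$, hence $H<N^{\rho_k}$). A Selberg minorant $\chi^-$ for the periodised indicator of $(-\delta,\delta)$ is a trigonometric polynomial of degree $\le H$ with $\widehat{\chi^-}(0)\gg\delta$ and $0\le\chi^-\le\mathbf{1}_{(-\delta,\delta)}$; evaluating $0=\sum_{n\le N}\chi^-(\alpha n^k+\beta n)$ and isolating the zero frequency gives
\[
N\ \ll\ \sum_{1\le h\le H}\Big|\sum_{n=1}^{N}e\big(h\alpha n^k+h\beta n\big)\Big|.
\]

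Next I would estimate each term $T(h):=\sum_{n\le N}e(h\alpha n^k+h\beta n)$. Since lower-order coefficients do not affect Weyl-type bounds, $T(h)$ obeys the same estimate as $\sum_{n\le N}e(h\alpha n^k)$; choosing by Dirichlet a convergent $a_h/q_h$ to $h\alpha$ with $q_h\le N^{k-1}$, the main conjecture \eqref{eq1.1} yields, via the standard passage from the mean value to an individual sum,
\[
|T(h)|\ \ll\ N^{1+\e}\Big(q_h^{-1}+N^{-1}+q_hN^{-k}\Big)^{1/(k(k-1))}.
\]
Call $h$ \emph{generic} if $q_h$ lies in $[N^{\eta},N^{k-1-\eta}]$ for a suitable small $\eta$; then $|T(h)|\ll N^{1-\rho_k+\e}$ with $\rho_k=1/(k(k-1))$, so the generic $h$ contribute $\ll HN^{1-\rho_k+\e}=o(N)$ once $\e$ is small. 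Hence $N\ll\sum_{h\ \mathrm{non\text{-}generic}}|T(h)|$.

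It remains to handle the non-generic $h$, for which $h\alpha$ is abnormally close to a rational of small denominator (the case $q_h$ near $N^{k-1}$ requiring a separate but parallel treatment). This is where the techniques of \cite{rcb3, rcb4} enter: one cannot simply take the largest $|T(h)|$, so instead one exploits that \emph{many} non-generic $h$ occur, deducing that their attached rational approximations are forced to be compatible and hence that $\alpha$ itself admits an approximation $|\alpha-a/q|$ with $q$ a small power of $N$ and the approximation very sharp. Given such an approximation one constructs $n\le N$ directly, of the shape $n=qt$ with $t$ confined to a short range so that $\|\alpha n^k\|<\delta/2$, and a pigeonhole over that range produces a $t$ for which in addition $\|\beta n\|<\delta/2$; this contradicts $\|\alpha n^k+\beta n\|>\delta$. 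Carrying out this construction while tracking every exponent is what pins $\rho_k$ down to $1/(k(k-1))$ in part~(a); it is the delicate step, since the gap $\e_0$ in the hypothesis may be arbitrarily small and the intermediate ranges of $q_h$ must be disposed of without loss. The factor-two gain over Theorem~\ref{thm1} arises because here only the two coefficients $\alpha,\beta$ need be controlled.

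For part~(b) the scheme is unchanged, except that in the second step \eqref{eq1.1} (used crudely) is replaced by the sharper minor-arc estimate for a single $k$-th power obtained by combining \eqref{eq1.1} with an efficient differencing: this yields, for non-major $h$, a saving of order $1/(k\log k)$ in place of $1/(k(k-1))$, and the same dichotomy and construction then give (ii) with $\rho_k=1/(k(2\log k+B\log\log k))$. I expect the minor-arc input to be essentially routine in both parts; the substance — and the principal obstacle — is the major-arc bookkeeping and the verification that the direct construction never forces an exponent smaller than the one claimed.
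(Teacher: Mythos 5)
Your part (a) outline opens in the same spirit as the paper (a Selberg-type minorant to show that if every $\|\alpha n^k+\beta n\|$ is large then $\sum_{h\le H}|T(h)|\gg N$, which is essentially what \cite[Theorem 2.2]{rcb3} delivers). But after that your execution diverges and contains a real gap. You invoke the bound $|T(h)|\ll N^{1+\e}(q_h^{-1}+N^{-1}+q_hN^{-k})^{1/(k(k-1))}$ and then declare $h$ \emph{generic} when $q_h\in[N^{\eta},N^{k-1-\eta}]$, claiming this gives $|T(h)|\ll N^{1-1/(k(k-1))+\e}$. That is false: if $q_h\asymp N^{\eta}$ the bound only yields $N^{1-\eta/(k(k-1))+\e}$, which is no saving at all. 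The full $1/(k(k-1))$ saving is only obtained once $q_h$ is of size at least roughly $N$, so the ``non-generic'' set — all $h$ with $q_h<N$ — is where essentially all the difficulty lives, and your treatment of it (``their attached rational approximations are forced to be compatible $\ldots$'') is precisely the content that needs a proof. The paper avoids the generic/non-generic split entirely: from the lower bound on $\sum_{m\le M}|g_k(m\bs\alpha;N)|$ one picks a single $m$ with $|g_k(m\bs\alpha;N)|\gg N/M$ and feeds it into \Cref{thm4} (second assertion, for $\alpha_{k-1}=\cdots=\alpha_2=0$), which outputs simultaneous rational approximations to $m\alpha_k$ and $m\alpha_1$ with a common small denominator $q$; setting $n=qm$ and using $\|n^j\alpha_j\|\le n^{j-1}\|n\alpha_j\|$ closes the contradiction. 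Your direct-construction paragraph is aiming at this, but everything that pins the exponent (why the intermediate ranges of $q_h$ can be disposed of, how the pigeonhole on the short range of $t$ is carried out) is left as an aspiration rather than an argument.

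Part (b) is where your proposal goes genuinely wrong. You treat (b) as the same scheme with a sharper minor-arc Weyl estimate for $\sum_n e(h\alpha n^k)$, claiming an exponent gain of order $1/(k\log k)$ from ``combining \eqref{eq1.1} with an efficient differencing.'' No such single-sum minor-arc estimate is available: the best Weyl-sum saving coming out of the main conjecture is of order $1/(k(k-1))$, and Wooley's stronger result \eqref{eq1.2} is a statement about small fractional parts of $\alpha n^k$ proved with smooth-number and Waring-type machinery, not a pointwise Weyl-sum bound you can plug into the circle-method dichotomy. It also does not explain the factor $2$ in $\rho_k=1/\bigl(k(2\log k+B\log\log k)\bigr)$. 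The paper's proof of (b) is entirely different and much simpler: take $\nu$ so that $\min_{n\le N}\|\alpha n^k\|\ll N^{-\nu}$ (Wooley), set $a=1/(2+\nu)$, $b=1-a$; by Dirichlet find $\ell\le N^b$ with $\|\alpha_1\ell\|\le N^{-b}$, then apply Wooley's bound to $\alpha_k\ell^k$ to find $m\le N^a$ with $\|\alpha_k(\ell m)^k\|\ll N^{-a\nu}$, and take $n=\ell m$; both $\|\alpha_1 n\|\le N^{2a-1}$ and $\|\alpha_k n^k\|$ are $\ll N^{-\nu/(2+\nu)}$, and the factor $2$ in the denominator is exactly the loss in $\nu\mapsto\nu/(2+\nu)$. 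You should replace your part (b) strategy with this two-stage Dirichlet construction; as proposed it would not yield the claimed exponent.
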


 \begin{thm}\label{thm3} 
 \begin{enumerate}
\item[(a)] Let $k \ge 6$, $1 \le s \le k(k-1)$. Then (iii) holds with $\s_{s,k} = s/k(k-1)$.
 \medskip

\item[(b)] Let 
 \[F(J, s, k) = \min \left(\frac sJ, \ \max_{J+1\le h \le s}
 \min\left(\frac{(2h-2)(s-k)+4k-4}{h(s-k)+4h-4}\, , \,
 \frac{s-h+J+1}J\right)\right)\]
 \end{enumerate}

\noindent Then (iii) holds for $k \ge 6$, $s > k(k-1)$ with
 \[\s_{s,k} = F(k(k-1), s, k).\]
In particular,
 \[\min_{\substack{
 0 \le n_1, \ldots, n_s \le N\\
 (n_1, \ldots, n_s)\ne \bs 0}} \|\b_1n_1^6 + \cdots 
 + \b_s n_s^6\| \ll  N^{-s/30+\e}(1 \le s \le 56).\]
 \end{thm}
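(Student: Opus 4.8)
The displayed inequality is the case $k=6$ of \cref{thm3}, so the plan is to prove parts (a) and (b) in general and then specialize. Write $F(\bs n)=\b_1n_1^k+\cdots+\b_sn_s^k$, put $g_j(\a)=\sum_{0\le n\le N}e(\a\b_jn^k)$, and set $\Delta=N^{-\s}$, where $\s$ is the target exponent (with a little room reserved for the eventual $N^\e$). I would suppose, for a contradiction, that $\|F(\bs n)\|>\Delta$ for every $\bs n\neq\bs 0$ in the box $0\le n_j\le N$, and bring in a Selberg-type minorant $\psi$ of the indicator of $(-\Delta,\Delta)$ on $\mb R/\mb Z$: a function with $0\le\psi\le\bs 1_{(-\Delta,\Delta)}$, with $\widehat\psi$ supported in $[-L,L]$ for a parameter $L\asymp\Delta^{-1}$, and with $\widehat\psi(0)=\int\psi\gg\Delta$. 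Expanding $\psi$ in its Fourier series and summing over the box gives
\[\sum_{\bs n}\psi(F(\bs n))=\widehat\psi(0)(N+1)^s+\sum_{1\le|h|\le L}\widehat\psi(h)\prod_{j=1}^sg_j(h).\]
Under the contradiction hypothesis the left side is at most $\psi(0)\le 1$, while the first term on the right is $\gg\Delta N^s=N^{s-\s}$; since $\psi\ge 0$ forces $|\widehat\psi(h)|\le\widehat\psi(0)\ll\Delta$, everything reduces to proving $\sum_{1\le|h|\le L}\prod_{j}|g_j(h)|=o(N^s)$.

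For part (a), where $1\le s\le k(k-1)$, I would argue by a dichotomy on each $\b_j$ using a Dirichlet approximation at level $N^{k+\s}$. If for some $j$ the denominator $q$ so obtained is $\le N$, then $n_j=q$ with the other coordinates $0$ already gives $\|F(\bs n)\|=\|\b_jq^k\|\le q^{k-1}N^{-k-\s}\le N^{-1-\s}$, and we are done at once. Otherwise every $h\b_j$ with $1\le|h|\le L$ is ``minor-arc'', so that the Weyl-sum bound deduced from \eqref{eq1.1}, namely $g_j(h)\ll_\e N^{1-1/k(k-1)+\e}$, applies to each factor. This deduction from the Bourgain--Demeter--Guth estimate is exactly the point at which the exponent $1/k(k-1)$, superseding Vinogradov's $\asymp 1/(k^2\log k)$, enters, and it is the source of the improvements over Danicic, Cook, and earlier work. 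Then $\prod_j|g_j(h)|\ll N^{s-s/k(k-1)+\e}$ uniformly in $h$, so $\sum_{|h|\le L}\prod_j|g_j(h)|\ll L\,N^{s-s/k(k-1)+\e}$, which is $o(N^s)$ as soon as $L\ll N^{s/k(k-1)-\e}$, i.e. as soon as $\s<s/k(k-1)$. This establishes (a), hence the displayed bound for $1\le s\le 30$.

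For part (b), with $s>k(k-1)$, the quantity $s/k(k-1)$ exceeds $1$ and one cannot keep $L\asymp N^\s$ while forcing every $h\b_j$ minor-arc; for some indices $g_j(h)$ will be close to $N$ on a non-negligible set of $h$. Following the technique of \cite{rcb4}, I would, for each $h$, partition $\{1,\dots,s\}$ into the indices on which $g_j(h)$ is ``large'' (at most $h$ of them, in the worst case) and the remainder, bounding the large indices by a H\"older/mean-value estimate that produces the first member $\frac{(2h-2)(s-k)+4k-4}{h(s-k)+4h-4}$ of the inner minimum in $F$, and the remaining indices by rerunning the part-(a) argument with the reduced count of variables, producing the second member $\frac{s-h+J+1}{J}$ with $J=k(k-1)$; sparsity of the ``large'' set over $h$ keeps the bookkeeping tight. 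Optimising over how many large indices one admits produces $\max_{J+1\le h\le s}\min(\cdots)$, and the outer $\min$ with $s/J$ records that one never beats the part-(a) bound obtained by discarding the large indices outright. For $k=6$ this gives $\s_{s,6}=F(30,s,6)$, and combining with part (a) for $s\le 30$ yields the displayed bound for $1\le s\le 56$.

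The hard part will be making the part-(a) dichotomy airtight: one must choose $\Delta$, $L$ and the Dirichlet modulus so that ``$\b_j$ well enough approximable to win by concentrating on the $j$th coordinate'' and ``$h\b_j$ minor-arc for every $|h|\le L$'' genuinely exhaust all possibilities uniformly in $j$, including the borderline denominators near $N$ and near $N^k$, and one must control the sparse set of $h$ on which some $g_j(h)$ escapes the saving; this is the content imported from \cite{rcb3,rcb4}. In part (b) the further difficulty is the combinatorial optimisation of the large/small split, which is precisely what converts the raw estimates into the exact formula $F(J,s,k)$.
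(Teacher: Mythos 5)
Your part (a) argument has a real gap at the central step, and the gap traces back to two intertwined issues: the choice of the Fourier cutoff $L$, and the claim that the dichotomy on $\b_j$ makes every $h\b_j$ ($1\le h\le L$) minor-arc.

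You take $L\asymp\Delta^{-1}=N^{s/k(k-1)-\e}$, which for $s$ close to $k(k-1)$ is nearly $N$. Over that long range of $h$, the hypothesis that $\b_j$ itself has no rational approximation with denominator $\le N$ and precision $\le N^{-k-\s}$ does \emph{not} imply that $h\b_j$ avoids the major-arc condition demanded by \Cref{thm4}. That condition, in the monomial case, asks for a rational $a/q$ with $q\le N^{\e}(NA^{-1})^k$ and $|qh\b_j-a|\le N^{-k+\e}(NA^{-1})^k$, i.e.\ a denominator up to about $N^{1/(k-1)}$ with a correspondingly relaxed precision. If $\b_j=a/q+\theta$ with $q>N$ and $|\theta|\le q^{-1}N^{-k-\s}$, then $h\b_j$ is close to the reduced fraction $ha/q$ whose denominator $q/\gcd(h,q)$ you can only bound below by $q/h\ge N/L=N^{1-\s+\e}$. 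For $\s$ near $1$ this is no lower bound at all, and moreover nothing prevents $h\b_j$ from possessing an entirely different rational approximation with small denominator (the convergents of $h\b_j$ are not controlled by those of $\b_j$). So the uniform estimate $g_j(h)\ll N^{1-1/k(k-1)+\e}$ for all $j$ and all $h\le L$ is simply not available, and your bound $\sum_{|h|\le L}\prod_j|g_j(h)|\ll L\,N^{s-s/k(k-1)+\e}$ does not follow.

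The paper avoids this in two ways. First, it keeps the cutoff short: it takes $L=[N^{1/k(k-1)-\e}]$ rather than $N^{s/k(k-1)-\e}$; the extra factor of $s$ in the exponent of the target $\Delta=N^{-s/k(k-1)+\e}$ is recovered from having $s$ free variables, not from a longer Fourier cutoff. Second, and more importantly, it does not try to make each $S_i(m)$ uniformly small. Instead it invokes Lemma 5.1 of \cite{rcb4} to extract a dyadic stratum: a set $\mc B\subset[1,L]$ on which $B_i<|S_i(m)|\le 2B_i$ for each $i$, with $B_1\cdots B_s|\mc B|\gg N^{s-\eta}$, and then runs the counting argument of Lemma 5.4 of \cite{rcb4} (with $K$ replaced by $J=k(k-1)$, using \Cref{thm4} in the role of the Weyl estimate) to obtain $|\mc B|\ll LN^{-1+2k\eta}|\mc B|^{k/s}$. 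The contradiction then drops out by separating $s\le k$ from $s>k$. This is a counting/mean-value mechanism for handling the $m$ at which some $|S_i(m)|$ is large, not a pointwise minor-arc bound. Your Selberg-minorant setup is a fine starting point and is compatible with the paper's, but you would need to import exactly this dyadic decomposition and the Lemma 5.4-type counting to close the argument; the ``\,$q\le N$ or minor-arc\,'' dichotomy cannot replace it.

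For part (b) your description is at the level of a heuristic: you correctly guess that the indices where $g_j(h)$ is large must be split off and handled by a separate mean-value/H\"older step, and that optimizing over the size of the split produces the formula $F(J,s,k)$, but nothing in the sketch actually produces either branch of the inner minimum or explains the outer $\min$ with $s/J$. The paper's proof is by design non-self-contained here: it states that one may run the proof of Theorem 1.8 of \cite{rcb4} verbatim with $K$ replaced by $J=k(k-1)$, with \Cref{thm4} together with Lemma 8.6 of \cite{rcb3} standing in for Lemma 5.2 of \cite{rcb4}. If you want to prove (b) from scratch you would need to reconstruct that combinatorial optimization in detail, which your sketch does not attempt.
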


We note here the existing results in each case. Let $K = 2^{k-1}$.

(i) This is known with $\mu_k = 1/K$ $(2 \le k \le 8)$ (Baker \cite{rcb}) and $\mu_k = 1/4k(k-2)$ for $k \ge 9$ (Wooley \cite{wool2}).
 \medskip

(ii) Only the special case $\a_1 = 0$ has been considered separately from (i). Here the result is known with $\rho_2 = 4/7$ (Zaharescu \cite{zah}); $\rho_k = 1/K$ $(3 \le k \le 6)$ (Danicic \cite{dan}), while there are the values $\rho_7 = 1/57.23$, $\rho_8 = 1/69.66$, $\rho_9 = 1/82.08$, $\rho_{10} = 1/94.62$, $\rho_{11} = 1/107.27, \ldots, \rho_{20} = 1/222.16$, given by Vaughan and Wooley \cite{vw}, which are better than the present method gives (in the monomial case) for $k \ge 11$. There is an absolute positive constant $C$ such that, for $k \ge 6$,
 \begin{equation}\label{eq1.2}
\min_{1 \le n \le N} \|\a n^k\| \ll_{k, \e} N^{-1/k(\log k + C\log\log k)} 
 \end{equation}
(Wooley \cite{wool}).
 \medskip

(iii) This is known with $\s_{s,k} = s/K$ for $k \ge 2$, $1 \le s \le K$ (Cook \cite{cook}), and
 \[\s_{s,k} = F(K, s, k) \quad (k \ge 4, s > K)\]
(Baker \cite{rcb4}). For $k = 2, 3$ and $s > K$, see Baker \cite{rcb, rcb4}; for example, $\s_{3,2} = 9/8$ and $\s_{5,3} = 5/4$.
 \medskip

We refer the reader to Heath-Brown \cite{hb}, Wooley \cite{wool}, and Vaughan and Wooley \cite{vw} for results of the kind: for irrational $\a$, we have
 \[\|\a n^k\| < n^{-\tau_k}\]
for infinitely many $k$. For example, one may take $\tau_k = 1/9.028 k$ for every $k$ \cite{wool}.

\section{Bounds for Weyl sums}\label{sec2}

We suppose throughout (as we may) that $\e$ is sufficiently small and $N$ is sufficiently large in terms of $k$, $\e$; we write $\eta = \e^2$.

 \begin{thm}\label{thm4}
Let $k \ge 3$ and $\e > 0$. Suppose that the Weyl sum
 \[g_k(\bs\a; N) : = \sum_{n=1}^N 
 e(\a_kn^k + \cdots + \a_1n)\]
satisfies
 \begin{equation}\label{eq2.1}
|g_k(\bs\a; N)| \ge A > N^{1 - 1/2k(k-1)+\e}. 
 \end{equation}
Then there exist integers $q$, $a_1, \ldots, a_k$ such that
 \begin{equation}\label{eq2.2}
1 \le q \le N^\e(NA^{-1})^k 
 \end{equation}
and
 \begin{equation}\label{eq2.3}
|q\, \a_j - a_j| \le N^{-j+\e}(NA^{-1})^k \quad (1 \le j \le k). 
 \end{equation}
If $\a_{k-1} = \cdots = \a_2 = 0$, then the same conclusion holds with the weaker lower bound.
 \begin{equation}\label{eq2.4}
|g_k(\bs \a; N)| \ge A > N^{1-1/k(k-1)+\e} 
 \end{equation}
in place of \eqref{eq2.1}.
 \end{thm}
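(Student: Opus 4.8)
The plan is to obtain Theorem~\ref{thm4} from \eqref{eq1.1} by running the argument of \cite{rcb3, rcb4}, with the role played there by classical Weyl differencing taken over by the sharp Weyl‑type inequality that \eqref{eq1.1} supplies. The first ingredient is the consequence of the main conjecture recorded in \cite{bdg}: for $k \ge 2$ and any $\bs\alpha$ admitting a Dirichlet approximation $|\alpha_k - a/q| \le q^{-2}$ with $(a,q) = 1$ and $1 \le q \le N^k$, one has
\[
\bigl|g_k(\bs\alpha; N)\bigr| \ll N^{1+\e}\bigl(q^{-1} + N^{-1} + qN^{-k}\bigr)^{1/k(k-1)}.
\]
I would also want the accompanying ``two‑sided'' form, in which the same line of argument additionally controls $|q\alpha_k - a|$: a lower bound $|g_k(\bs\alpha;N)| \ge A$ then forces both $q \ll N^{\e}(NA^{-1})^{k}$ \emph{and} $|q\alpha_k - a| \ll N^{-k+\e}(NA^{-1})^{k}$, at least once $A$ exceeds $N^{1-1/k(k-1)+\e}$.

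With the top coefficient thus pinned down, the remaining coefficients are reached by descent on the degree, exactly as in \cite{rcb3, rcb4}. One passes to a residue class $n \equiv r \pmod q$ on which $|g_k|$ retains size $\gg A/q$, substitutes $n = qm + r$, and observes that modulo integers the degree‑$k$ part of the phase is $(\alpha_k - a/q)(qm+r)^k$; once this residual term has been stripped off (it has small total variation, or can be made so by an additional partial‑summation step when $q$ is not near its maximum), what is left is, up to a unimodular factor, $g_{k-1}(\bs\gamma; M)$ with $M \asymp N/q$ and with $\gamma_1, \dots, \gamma_{k-1}$ explicit linear combinations of $\alpha_1, \dots, \alpha_k$ having integer coefficients times powers of $q$ (the contribution of $\alpha_k$ being already under control). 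The inductive hypothesis in degree $k-1$ supplies a common modulus $q'$ and integers $b_i$ approximating the $\gamma_i$; inverting the unipotent, integral change of coordinates and passing to $qq'$ (or the relevant divisor thereof) recovers $a_1, \dots, a_{k-1}$, and together with the data already in hand for $\alpha_k$ this yields \eqref{eq2.2}--\eqref{eq2.3}. It is the accumulation of the losses incurred over the $k-1$ descent steps, each constrained by the requirement that $qq'$ not exceed $N^{\e}(NA^{-1})^{k}$, that forces the stronger hypothesis \eqref{eq2.1} in the general case.

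When $\alpha_{k-1} = \dots = \alpha_2 = 0$ the descent degenerates: after $\alpha_k$ has been handled only the linear term $\alpha_1 n$ survives, and on a progression mod $q$ its treatment is that of a geometric progression, immediately giving $|q\alpha_1 - a_1| \ll N^{-1+\e}(NA^{-1})^{k}$ with the same $q$ (the remaining $a_j$ being $0$). Since no intermediate degrees intervene, there is no accumulation of loss, and the single use of the Weyl inequality — whose exponent is precisely $1/k(k-1)$ — already yields the conclusion under the weaker bound \eqref{eq2.4}.

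The step I expect to be the main obstacle is the descent bookkeeping: verifying that the moduli produced at successive stages combine to something within $N^{\e}(NA^{-1})^{k}$ and that each $|q\alpha_j - a_j|$ meets its target $N^{-j+\e}(NA^{-1})^{k}$, which is delicate because the length $M = N/q$ and the amplitude $A/q$ shrink together while the inductive estimates are expressed in these reduced quantities rather than in $N$ and $A$. A subsidiary point is the treatment of the residual degree‑$k$ term $(\alpha_k - a/q)n^k$ in the descent when $q$ is small, where its oscillation over $1 \le n \le N$ need not be $O(1)$ and must be removed by summation by parts (or a further Dirichlet step) before the degree can honestly be lowered.
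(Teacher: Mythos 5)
Your plan is not the paper's argument, and it has a concrete gap at its very first step. The Weyl--Vinogradov inequality you invoke — $|g_k(\bs\a;N)|\ll N^{1+\e}\bigl(q^{-1}+N^{-1}+qN^{-k}\bigr)^{1/k(k-1)}$, which is indeed what \eqref{eq1.1} yields — does \emph{not} force $q\ll N^{\e}(NA^{-1})^{k}$. Running it backwards from $|g_k|\ge A$ only gives $q\ll N^{\e'}(NA^{-1})^{k(k-1)}$, and for $k\ge 3$ the exponent $k(k-1)$ is strictly worse than the exponent $k$ claimed in \eqref{eq2.2}--\eqref{eq2.3}. There is no ``two-sided form'' of that inequality which sharpens $k(k-1)$ to $k$; the passage from the coarse modulus of size $(NA^{-1})^{k(k-1)}$ to the sharp one of size $(NA^{-1})^{k}$ is exactly the content of a separate \emph{refinement} step (Lemma~4.6 of \cite{rcb3}, restated as \Cref{lem1} in the paper), and it requires as a hypothesis the coarse approximation you would be trying to conclude. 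Your monomial case fails for the same reason: even with $\a_2=\cdots=\a_{k-1}=0$, one use of the Weyl inequality lands you at modulus $(NA^{-1})^{k(k-1)}$, and the refinement is still indispensable.

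Beyond that initial error, the overall strategy diverges from the paper's. The paper does \emph{not} descend on the degree. It applies the Vinogradov-method machinery of \cite[Theorem 4.3]{rcb3} (with the new mean value bound $J_{k(k-1)/2,\,k-1}(N)\ll N^{k(k-1)/2+\e}$ fed in) to produce, in one stroke, coarse rational approximations $q_j\a_j\approx b_j$ with $q_j\ll(NA^{-1})^{k(k-1)}(\log N)^{C}$ \emph{simultaneously for all} $j=2,\dots,k$; it then forms the lcm $q_0$, checks $q_0\ll(NA^{-1})^{k(k-1)}(\log N)^{C}$, and finally invokes \Cref{lem1} with $r=q_0/d$ to upgrade to \eqref{eq2.2}--\eqref{eq2.3}. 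Verifying \Cref{lem1}'s hypothesis $A>r^{1-1/k}N^{\e}$ from \eqref{eq2.1} (respectively \eqref{eq2.4} in the monomial case, where $r=q_k$ can be taken smaller because only one coefficient enters the lcm) is where the threshold $1-1/2k(k-1)$ versus $1-1/k(k-1)$ actually comes from — not from any accumulation of loss over descent steps. Your proposed induction on $k$ by substituting $n=qm+r$ and passing to $g_{k-1}$ on a shorter range would, as you yourself observe, shrink the amplitude to roughly $A/q$ and the length to $N/q$ in lockstep, so the inductive threshold $(N/q)^{1-1/2(k-1)(k-2)+\e}$ need not be met once $q$ approaches its permitted size; you flag this but do not resolve it, and I do not see how it can be resolved within the descent framework. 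You should instead work from \cite[Theorem 4.3 and Lemma 4.6]{rcb3} as the paper does.
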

 
 \begin{proof}
We initially proceed exactly as in the proof of \cite[Theorem 4.3]{rcb3} with $\t$ replaced by 0 and $\ell$ replaced by $(k-1)/2$. This is permissible since we have
 \[J_{s, k-1}(N) \ll N^{s+\e}\]
with $s = k(k-1)/2$, in place of the bound for $J_{s,k-1}(N)$ used in \cite{rcb3}. We find that for $j=2, \ldots, k$ there are coprime pairs of integers $q_j$, $b_j$ with
 \begin{gather*}
1 \le q_j \ll (NA^{-1})^{k(k-1)}(\log N)^C\\
|q\a_j - b_j| \le N^{-j+\e}(NA^{-1})^{k(k-1)}
 \end{gather*}
where we shall use $C$ for an unspecified positive constant depending on $k$. Let $q_0$ be the l.c.m of $q_2, \ldots, q_k$. We now follow the argument of \cite[pp. 41--42]{rcb3} to obtain
 \begin{equation}\label{eq2.5}
q_0 \ll (\log N)^C (NA^{-1})^{k(k-1)}. 
 \end{equation}

It follows that, with $a_j = q_0b_j/q_j$, we have
 \begin{equation}\label{eq2.6}
|q_0 \a_j - \a_j| \le N^{-j+2\e}(NA^{-1})^{2k(k-1)}\quad (j=2, \ldots, k).
 \end{equation}

We now appeal to Lemma 4.6 of \cite{rcb3}, which we restate here for clarity as \Cref{lem1}.

 \begin{lem}\label{lem1}
Suppose that there are integers $r$, $v_2, \ldots, v_k$ such that $\gcd$ $(r, v_2, \ldots, v_k) = 1$,
 \begin{equation}\label{eq2.7}
|q_j r - v_j| \le N^{1-j}/4k^4 \quad (j=2,\ldots, k), 
 \end{equation}
and that
 \begin{equation}\label{eq2.8}
|g_k(\bs\a; N)| \ge H > r^{1-1/k}N^\e. 
 \end{equation}
There is a natural number $t \le 2k^2$ such that
 \begin{gather}
tr \le (NH^{-1})^k N^\e,\label{eq2.9}\\[2mm]
t|\a_j r - v_j| \le (NH^{-1})^k N^{-j+\e}\quad (j=2, \ldots, k)\label{eq2.10}\\[2mm]
\|tr\, \a_1\| \le (NH^{-1})N^{-1+\e}.\label{eq2.11}
 \end{gather}
 \end{lem}

We now apply the lemma with $A = H$, $r = q_0d^{-1}$, $v_j = a_jd^{-1}$ where $d = \gcd(q_0, a_2, \ldots, a_k)$. From \eqref{eq2.5} and \eqref{eq2.6},
 \[|\a_j r - v_j| \le N^{-j+2\e}
 (NA^{-1})^{2k(k-1)} \le N^{-j+1}(4k^4)^{-1}\]
since
 \[(NA^{-1})^{2k(k-1)} \le N^{1-12\e}\]
and $r \le N^{1-5\e}$,
 \[A r^{-1+1/k} N^{-2\e} \ge 
 N^{1-1/k(k-1)-1+1/k-C\e} \gg 1.\]
The inequalities \eqref{eq2.9}--\eqref{eq2.11} now yield the first assertion of the theorem with $q =tr$. For the second assertion, since $\a_2, \ldots, \a_{k-1}$ are 0, we may take $r=q_k$, $v_k = b_k$, $v_2 = \cdots = v_{k-1} = 0$, $H = A$ in the application of \Cref{lem1}. (The inequality \eqref{eq2.4} suffices in the earlier part of the argument.) We know that
 \[|r\a_k - a_k| \le N^{-k+\e}(NA^{-1})^{k(k-1)}\]
rather than the weaker bound \eqref{eq2.6}. We may now complete the proof in the same way as before.
 \end{proof}
 
\section{Proof of Theorems \ref{thm1}, \ref{thm2}, and \ref{thm3}}\label{sec3}

 \begin{proof}[Proof of \Cref{thm1}]
Suppose there is no solution of
 \begin{equation}\label{eq3.1}
1 \le n \le N, \ \|\a_kn^k + \cdots + \a_1n\| \le N^{-1/J+\e} 
 \end{equation}
where $J$ denotes $2k(k-1)$. By \cite[Theorem 2.2]{rcb3} we have
 \[\sum_{m=1}^M |g_k(m\bs \a; N)| > N/6,\]
where $M = [N^{+1/J-\e}]$. There is an integer $m$, $1 \le m \le M$ such that
 \[|g_k(m\a; N)| > A = N/6M.\]
We have
 \[(NA^{-1})^{2k(k-1)} \ll M^{2k(k-1)}
 \ll N^{1 - 2k(k-1)\e}.\]
By \Cref{thm4} there is a natural number $q = tr$ such that
 \begin{align}
q \ll N^\e &(NA^{-1})^k \ll M^k,\label{eq3.2}\\[2mm]
\|q m \a_j\| &\ll (NA^{-1})^k N^{-j+\e}\label{eq3.3}\\[2mm]
&\ll M^k N^{-j+\e}\quad (j = 1, \ldots, k).\notag
 \end{align}
Now let $n = qm$. Then
 \begin{align*}
n \ll M^{k+1} &\ll N^{(k+1)/J} \ll N^{1-\e},\\[2mm]
\|n^j\a_j\| &\le n^{j-1} \|n\a_j\|\\[2mm]
&\ll M^{(k+1)(j-1)+k} N^{-j+\e} \ll M^{-1}N^{-\e}
 \end{align*}
since $M^{(k+1)j} \ll N^{(k+1)j/J-(k+1)\e} \ll N^{j-2\e}$. It follows that $n$ satisfies \eqref{eq3.1}, which is a contradiction. This completes the proof of \Cref{thm1}.
 \end{proof}
 
 \begin{proof}[Proof of \Cref{thm2}(a)]
We follow the above proof; this time, $J$ denotes $k(k-1)$. The second assertion of \Cref{thm4} provides an integer $q = tr$ satisfying \eqref{eq3.2}, and \eqref{eq3.3} for the relevant values $j=1$, $k$. Now we complete the proof as before.
 \bigskip

 \begin{proof}[Proof of \Cref{thm2}(b)]
This is a simple consequence of Wooley's bound \eqref{eq1.2}. Let $\nu = \nu(k)$ have the property that
 \[\min_{1\, \le\, n\, \le\, N} \|\a n^k\|
 \ll_k N^{-\nu}\]
for $N \ge 1$ and real $\a$. Let $a = \frac 1{2+\nu}$, $b = 1 - a$. By Dirichlet's theorem there is a natural number $\ell \le N^b$ with
 \[\|\a_1 \ell\| \le N^{-b}.\]
We now choose another natural number $m \le N^a$ with
 \[\|\a_k \ell^k m^k\| \ll N^{-a\nu} = 
 N^{-\nu/(2+\nu)}.\]

Note that
 \[\|\a_1 \ell m\| \le N^{a-b} = N^{2a-1}.\]
Since $2a - 1 = -\frac \nu{2+\nu}$, we have, with $n = \ell m$,
 \[1 \le n \le N, \ \|\a_k n^k + \a_1n\|
 \ll N^{-\nu/(2+\nu)}.\]
Taking $\nu = 1/k(\log k + C \log\log k)$, we obtain
 \[\frac \nu{2 + \nu} = \frac 1{2k\log k + 2C\log\log k+1},\]
so that \Cref{thm2}(b) holds with a suitable choice of $B$.
 \end{proof}
 
 \begin{exa*}
If we take $k = 20$, $\nu = 1/222.16$ from \cite{vw}, we obtain the value $1/445.32$ for $\rho_{20}$, which is not as good as \Cref{thm2}(a). The proof of \Cref{thm2}(b) is relatively crude, so it may be possible to do better using ideas from \cite{vw}, \cite{wool}.
 \end{exa*}
 
\noindent\textit{Proof of \Cref{thm3}(b).}
We can follow the proof of Theorem 1.8 of \cite{rcb4} (in the case $k \ge 4$) verbatim, replacing $K$ by $J:=k(k-1)$. The role of Lemma 5.2 of \cite{rcb4} is played by \Cref{thm4} in conjunction with \cite[Lemma 8.6]{rcb3}. 
\bigskip

\noindent\textit{Proof of \Cref{thm3}(a).}
Write $J=k(k-1)$ again. We assume that there is no solution of
 \begin{equation}\label{eq3.4}
\|\b_1 n_1^k + \cdots + \b_s n_s^k\| \le N^{-s/J + \e} 
 \end{equation}
with $0 \le n_1, \ldots, n_s \le N$, $(n_1, \ldots, n_s) \ne \bs 0$. Let 
 \[S_i(m) = \sum_{n=1}^N e(m\b_in^k), \quad
 L = [N^{1/J-\e}].\]
Following \cite{rcb4}, Lemma 5.1, we find that there is a set $\mc B$ of natural numbers, $\mc B \subset [1, L]$, and there are positive numbers $B_1 \ge \cdots \ge B_s$ such that
 \[B_i < |S_i(m)| \le 2B_i \quad
 (i=1, \ldots, s)\]
and
 \[B_1 \ldots B_s\, |\mc B| \gg N^{s-\eta}.\]
(This may require a reordering of $\b_1, \ldots, \b_s$.) We can now follow the proof of Lemma 5.4 on \cite{rcb4}, with $K$ replaced by $J$, to obtain the inequality
 \[|\mc B| \ll LN^{-1+2k\eta} |\mc B|^{k/s}.\]

Suppose first that $s > k$. Then
 \[LN^{-1 + 2k\eta} \gg |\mc B|^{1-k/s} \gg 1,\]
contrary to the definition of $L$.

Suppose now that $s \le k$. Then
 \begin{align*}
L^{\frac ks - 1} &\ge |\mc B|^{\frac ks - 1} \gg L^{-1} N^{1-2k\eta},\\[2mm]
L &\gg N^{\frac sk - 2s\eta}. 
 \end{align*}
This is again contrary to the definition of $L$, and we conclude that there is a solution of \eqref{eq3.4}.
 \end{proof}

 \end{document}